\newtheorem{A}{Assertion}
\newtheorem{thm}{Theorem}
\newtheorem{com}{Comment}
\newtheorem{remark}{Remark}
\newtheorem{cor}{Corollary}
\newcommand{\blind}{0}
\begin{document}

\def\spacingset#1{\renewcommand{\baselinestretch}%
{#1}\small\normalsize} \spacingset{1}


\if0\blind
{
  \title{\bf A note on the distribution of the extreme degrees of a random graph via the Stein-Chen method}

  \author{
  Yaakov Malinovsky
    \thanks{email: yaakovm@umbc.edu}
   \\
    Department of Mathematics and Statistics\\ University of Maryland, Baltimore County, Baltimore, MD 21250, USA\\
}
  \maketitle
} 

\if1\blind
{
  \bigskip
  \begin{center}
    {\LARGE\bf Title}
\end{center}
  \medskip
} \fi
\begin{abstract}

We offer an alternative proof, using the Stein-Chen method, of Bollob\'{a}s' theorem concerning the distribution of the extreme degrees of a random graph. Our proof also provides a rate of convergence of the extreme degree to its asymptotic distribution.
The same method also applies in a more general setting
where the probability of every pair of vertices being connected by edges depends on the number of vertices.
\end{abstract}

\noindent%
{\it Keywords: random graphs, extremes, positive dependence, Poisson approximation, total variation distance}

\noindent%
{\it MSC2020: 05C80; 05C07; 62G32}

\spacingset{1.45} 

\section{Introduction}
Consider a random graph with $n$ labeled vertices $\left\{1,2,\ldots,n\right\}$ in which each edge $E_{ij}, 1\leq i<j\leq n$
is chosen independently and with a fixed probability $p$, $0<p<1$.
Denote by $d_i$ degree of the vertex $i, i=1,\ldots,n$ of a graph $G\in \mathbb{G}(n, p)$, where $\mathbb{G}(n, p)$ is the
probability space of graphs, and by $d_{1:n}\geq d_{2:n}\geq \cdots \geq d_{n:n}$ the degree sequence arranged in decreasing order.
The asymptotic distribution of $d_{n:n}$ was established in \cite{ER1961}. \cite{I1973} studied the asymptotic distribution of $d_{m:n}$ with fixed $m$ and $p=p(n)\rightarrow 0$ as $n\rightarrow \infty$.
For a fixed $p$ ($0<p<1$), \cite{B1980} found the asymptotic distribution of $d_{m:n}$ and proved:
\begin{thm}[\cite{B1980}]
\label{eq:BB}
Suppose $p$ is fixed, $0<p<1$, $q=1-p$. Then, for every fixed natural number $m$ independent of $n$, and for a fixed real number $t$,
\begin{equation}
\label{eq:B}
{\displaystyle \lim_{n\rightarrow \infty} P\left(\frac{d_{m:n}-np}{\sqrt{npq}}< a_n t+b_n\right)
=e^{-e^{-t}}\sum_{k=0}^{m-1} \frac{e^{-tk}}{k!}},
\end{equation}
where
\begin{equation*}
a_n=(2 \log n)^{-\frac{1}{2}},\,\,\,\,\, b_n=(2\log n)^{\frac{1}{2}}-\frac{1}{2}(2 \log n)^{-\frac{1}{2}}\left(\log\log n+\log 4\pi\right).
\end{equation*}
\end{thm}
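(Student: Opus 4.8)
Fix the natural number $m$ and the real $t$, and abbreviate $x_n = np + \sqrt{npq}\,(a_nt+b_n)$. The starting observation is the elementary identity
\[
  P\bigl(d_{m:n} < x_n\bigr) \;=\; P\bigl(W_n \le m-1\bigr),
  \qquad
  W_n := \#\{\, i \in \{1,\dots,n\} : d_i \ge x_n \,\} = \sum_{i=1}^{n} X_i ,
\]
where $X_i := \mathbf 1[d_i \ge x_n]$, because $d_{m:n}\ge x_n$ holds exactly when at least $m$ of the $n$ degrees are $\ge x_n$. Thus the theorem follows once we show that $W_n$ is close in total variation to a Poisson law whose parameter tends to $e^{-t}$. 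The plan is to do this with the Stein--Chen method for a sum of \emph{positively related} indicators. Each $X_i$ is a nondecreasing function of the independent edge indicators $\{E_{jk}\}$; conditioning on the increasing event $\{X_i=1\}$ therefore stochastically increases the whole edge configuration (Harris/FKG inequality on the product space $\{0,1\}^{\binom n2}$), and hence, by Strassen's theorem, there is a monotone coupling realizing, above the unconditional vector $(X_j)_{j\ne i}$, a coordinatewise-larger vector with the conditional law $\mathcal L\bigl((X_j)_{j\ne i}\mid X_i=1\bigr)$ --- which is exactly the positive-relation hypothesis. The corresponding Poisson approximation bound (Barbour--Holst--Janson, Thm.~2.G) reads
\[
  d_{TV}\!\bigl(\mathcal L(W_n),\,\mathrm{Po}(\lambda_n)\bigr)
  \;\le\; \min\!\bigl(1,\lambda_n^{-1}\bigr)\Bigl(\mathrm{Var}(W_n) - \lambda_n + 2 n p_n^2\Bigr),
  \qquad \lambda_n := \mathbb E W_n = n p_n ,\ \ p_n := P(d_1\ge x_n),
\]
and this inequality is precisely what will furnish the advertised rate of convergence.

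It then remains to verify two asymptotics: (i) $\lambda_n \to e^{-t}$, and (ii) $\mathrm{Var}(W_n) - \lambda_n + 2 n p_n^2 \to 0$. For (i), since $d_1\sim\mathrm{Bin}(n-1,p)$ we have $\lambda_n = n\,P\bigl(\mathrm{Bin}(n-1,p)\ge x_n\bigr)$; because $x_n-np \sim \sqrt{npq}\sqrt{2\log n}$ lies in the moderate-deviation range, a sharp normal approximation of the binomial tail with vanishing \emph{relative} error (Cram\'{e}r's moderate-deviation theorem, or a local central limit theorem with an Edgeworth correction) reduces $\lambda_n$ to $n\,\overline\Phi(a_nt+b_n)$, and a Mills-ratio computation shows that $a_n$ and $b_n$ are chosen precisely so that $n\,\overline\Phi(a_nt+b_n)\to e^{-t}$; these are the classical Gumbel normalizing constants for the maximum of $n$ i.i.d.\ standard normals. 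For (ii), by vertex exchangeability $\mathrm{Var}(W_n) = n p_n(1-p_n) + n(n-1)\,\mathrm{Cov}(X_1,X_2)$. Writing $d_j = E_{12} + d_j'$ with $d_1',d_2'\sim\mathrm{Bin}(n-2,p)$ independent of each other and of $E_{12}$, and conditioning on $E_{12}$, a one-line computation gives $\mathrm{Cov}(X_1,X_2) = pq\,\bigl(P(d_1' = j_n)\bigr)^2$ for the single integer $j_n$ in $[x_n-1,x_n)$, and the local CLT bounds $P(d_1'=j_n)$ by $O\!\bigl(\sqrt{\log n}\,/\,n^{3/2}\bigr)$. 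Hence $n(n-1)\mathrm{Cov}(X_1,X_2) = O(\log n/n)\to 0$, while $n p_n^2 = \lambda_n p_n = O(1/n)\to 0$; a short rearrangement shows the bracket in the Stein--Chen bound equals $n p_n^2 + n(n-1)\mathrm{Cov}(X_1,X_2)$, which therefore vanishes.

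Combining these, $d_{TV}\bigl(\mathcal L(W_n),\mathrm{Po}(\lambda_n)\bigr)\to 0$, so by (i) and continuity
\[
  P\bigl(d_{m:n}<x_n\bigr) = P(W_n\le m-1) = \sum_{k=0}^{m-1}\frac{\lambda_n^{k} e^{-\lambda_n}}{k!} + o(1)
  \;\longrightarrow\; e^{-e^{-t}}\sum_{k=0}^{m-1}\frac{e^{-tk}}{k!},
\]
which is \eqref{eq:B}. The more general setting in which $p = p_n$ varies with $n$ is handled by the identical argument, with the fixed-$p$ tail and local-limit estimates replaced by their $p_n$-versions.

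I expect the genuine obstacle to be part (i): showing that $P(\mathrm{Bin}(n-1,p)\ge x_n)$ agrees with $\overline\Phi\bigl((x_n-np)/\sqrt{npq}\bigr)$ up to a factor $1+o(1)$ at a deviation of order $\sqrt{2\log n}$ standard deviations is not automatic and must be extracted carefully from moderate-deviation theory, and one must separately check that the two discreteness adjustments (replacing $x_n$ by $\lceil x_n\rceil$, and the binomial parameter $n-1$ by $n$) do not perturb the limit. A minor but worthwhile point is to observe that the \emph{local} version of the Stein--Chen bound is useless here, since the $X_i$ are pairwise dependent for every pair and so their dependency graph is complete; it is the positive-relation structure, exploiting that this dependence is weak and monotone, that makes the method go through.
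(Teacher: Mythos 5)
Your proposal is correct and follows essentially the same route as the paper's own Stein--Chen proof in Section~2: the same reduction $P(d_{m:n}<x_n)=P(W_n\le m-1)$ with $W_n=\sum_i \mathbf 1[d_i\ge x_n]$, the same appeal to positive relation/association of increasing functions of the independent edge indicators together with Barbour--Holst--Janson Theorem~2.G, the same covariance identity $\mathrm{Cov}(X_1,X_2)=pq\,\bigl[P(\mathrm{Bin}(n-2,p)=\lfloor y\rfloor)\bigr]^2$ bounded by a local CLT to give $n(n-1)\mathrm{Cov}(X_1,X_2)=O(\log n/n)$, and the same verification $\lambda_n\to e^{-t}$ via a moderate-deviation refinement of the normal tail (Feller VII.6 / Cram\'er). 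The only respect in which it diverges is from the moment-method sketch of Bollob\'as reproduced immediately after the theorem statement, not from the paper's actual argument, and your closing caveats (the relative-error tail estimate at $\sqrt{2\log n}$ standard deviations, and the harmless $n$ versus $n-1$ and integer-rounding adjustments) are exactly the points the paper handles in Assertions~1--3.
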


We provide a proof sketch of Theorem \ref{eq:BB}.
\smallskip

\noindent
{\bf Proof sketch of \cite{B1980} }:

Let $c$ be a fixed positive constant and let $x=x(n,c)$ be defined by
\begin{equation}\label{eq:Cramer}
 n\frac{(2\pi)^{-1/2}e^{-x^2/2}}{x}=c.
\end{equation}
Set $K=pn+x(npq)^{1/2}$ and denote by $X=X(n,c)$ the number of vertices of at least $K$ degree.
For the natural number $r$, $Y_r={X \choose r}$.
Thus, Bollob\'{a}s established the inequality
\begin{equation}
\label{eq:Bi}
{n \choose r}
\left(P(Bin(n-r, p)\geq K\right)^r \leq E(Y_r)\leq {n \choose r}\left(P(Bin(n-r, p)\geq K-r-1\right)^r.
\end{equation}

Then, by applying the de Moivre-Laplace limit theorem to the left-hand side (LHS) and
right-hand side (RHS) of \eqref{eq:Bi} and using the relation
$1-\Phi(x)\sim \frac{1}{x} (2\pi)^{-1/2}e^{-x^2/2}$ as $x\rightarrow \infty$ (\cite{F1968}) p. 175, Lemma 2), where $\Phi()$ is the CDF of a standard normal random variable,
Bollob\'{a}s proved that
\begin{equation}
\label{eq:Y}
\lim_{n\rightarrow \infty}E(Y_r)=\frac{c^r}{r!}.
\end{equation}
\eqref{eq:Y} implies that for every fixed $r$, the $r$th moment of $X(n,c)$
tends to the $r$th moment of the Poisson distribution with mean $c$,
and therefore from the Carleman theorem (\cite{F1971}, pp. 227-228) follows that
\begin{equation*}
\lim_{n\rightarrow \infty}P(X(n,c)=k)=e^{-c}\frac{c^k}{k!},
\end{equation*}
i.e.,
$X(n,c)$ tends to the Poisson distribution with mean $c$ (see also \cite{B1981}).
Since for $d_{1:n}\geq d_{2:n}\geq \cdots \geq d_{n:n}$, $d_{m:n}<K$ iff $X(n,c)\leq m-1$, it follows that
\begin{equation*}
\lim_{n\rightarrow \infty}P(d_{m:n}<K)=\lim_{n\rightarrow \infty}\sum_{k=0}^{m-1}P(X(n,c)=k)=e^{-c}\sum_{k=0}^{m-1}e^{-c}\frac{c^k}{k!}.
\end{equation*}
Then, \cite{B1980} showed that when $n$ is large the solution of \eqref{eq:Cramer} is $x=a_n t+b_n,\,\,\, c=e^{-t}$.

\section{Alternative proof and Rate of Convergence}
We present an alternative proof of Bollob\'{a}s' theorem via the Stein-Chen method.
It allows us to obtain a convergence rate.

Denote normalized vertex degrees (zero expectation and unit variance) as $d_1^{*}, d_2^{*},\ldots,d_n^{*}$, i.e., $\displaystyle {d_i^{*}=(d_i-E(d_i))/(Var(d_i))^{1/2}
}$, where $E(d_i)=(n-1)p,\,\,\, Var(d_i)=(n-1)p(1-p)$ for $i=1,\ldots,n$,
and their corresponding decreasing sequence as $d_{1:n}^{*}\geq d_{2:n}^{*}\geq \cdots \geq \ldots \geq d_{n:n}^{*}$.

Let ${\displaystyle I_{i}^{(n)}=I(d_{i}^{*}>x_n(t))}$  be an indicator function of an event ${\displaystyle \left\{d_{i}^{*}>x_n(t)\right\}}$,
where we choose ${\displaystyle x_n(t)=a_n t+b_n=\sqrt{2\log n}+\frac{t-\frac{\log\log n+\log 4\pi}{2}}{\sqrt{2\log n}}}$, with $a_n$ and $b_n$ as defined in \eqref{eq:B}.

Set $${\displaystyle W_{n}=\sum_{i=1}^{n}I_{i}^{(n)},\,\,\, \pi_{i}^{(n)}=P(I_{i}^{(n)}=1),\,\,\,\lambda_n=E(W_n)=\sum_{i=1}^{n}\pi_{i}^{(n)}=n\pi_{1}^{(n)}.}$$

We will need five Assertions.
\begin{A}
\label{eq:A1}
\begin{equation*}
\pi_{1}^{(n)}=P\left(d_{1}^{*}>x_n(t)\right)\sim 1-\Phi\left(x_n(t)\right),
\end{equation*}
where the sign $\sim$ means that the ratio of the quantities on the LHS and the RHS tends to 1 for $n\rightarrow \infty$.
\end{A}

\begin{proof}
Follows from \cite{F1968}(pp. 192-193, Chapter VII.6) since for a fixed real number $t$, $x^3_n(t)/\sqrt{n}\rightarrow 0$ as $n\rightarrow \infty$.
\end{proof}

\begin{A}
\label{eq:A2}
For a fixed real number $t$,
\begin{equation*}
{\displaystyle  n\pi_{1}^{(n)}\sim e^{-t}.}
\end{equation*}
\end{A}
\begin{proof}
From  Assertion \ref{eq:A1} combined with the result on page 374 of \cite{C1946}, it follows that
${\displaystyle \lim_{n\rightarrow \infty}\lambda_n= \lim_{n\rightarrow \infty}n \pi_{1}^{(n)}=\lim_{n\rightarrow \infty}n P(d_{1}^{*}>x_n(t))=e^{-t}}.$
\end{proof}

\begin{A}
\label{eq:A3}
For a fixed real number $t$, if $npq\rightarrow \infty$ as $n\rightarrow \infty$, where $p+q=1$, then
\begin{align}
Cov\left(I_{1}^{(n)}, I_{2}^{(n)}\right)\sim
2e^{-2t} \frac{\log n}{n^{3}}.
\end{align}
\end{A}
\begin{proof}
By $B_n$ denote the random variable $Bin(n, p)$, and set $$y=(n-1)p+x_n(t)\sqrt{(n-1)pq}.$$
Under this notation, ${\displaystyle \pi_1^{(n)}=P(B_{n-1}>y)}$, and then
using the formula of total probability we obtain
\begin{align}
\label{eq:cov}
&
Cov\left(I_{1}^{(n)}, I_{2}^{(n)}\right)=
p[P(B_{n-2}>y-1)]^2+q[P(B_{n-2}>y)]^2-[pP(B_{n-2}>y-1)+qP(B_{n-2}>y)]^2 \nonumber\\
&
=
pq\left[P(B_{n-2}>y-1)-P(B_{n-2}>y)\right]^2=pq\left[P(B_{n-2}=\lfloor y\rfloor)\right]^2
=\frac{q}{p}\frac{(\lfloor y+1\rfloor)^2}{(n-1)^2}\left[P(B_{n-1}=\lfloor y+1\rfloor)\right]^2.
\end{align}

The de Moivre-Laplace theorem (\cite{R1970}, p. 204, Theorem 4.5.1.) states that uniformly in $k$,
\begin{align*}
{\displaystyle
P(B_n=k)\sim \frac{1}{\sqrt{2\pi n p q}}e^{-\frac{(k-np)^2}{2npq}}\,\,\,
\text{if}\,\,\,|k-np|=o(n^{2/3})}.
\end{align*}
Therefore, by direct calculation, it follows that for a fixed real $t$,
\begin{align}
\label{eq:Bn}
&
{\displaystyle
P(B_{n-1}=\lfloor y+1\rfloor)\sim e^{-t} \frac{\sqrt{2}}{\sqrt{pq}}\frac{\sqrt{\log n}}{n^{3/2}}}\,\,\,\,\text{if}\,\,\,\,\, {\displaystyle x_n(t)\sqrt{(n-1)pq}=o(n^{2/3})} \nonumber \\
&
\text{and}\,\,\,\, npq\rightarrow \infty\,\,\,\,\, \text{as}\,\,\, n\rightarrow \infty.
\end{align}
The condition ${\displaystyle x_n(t)\sqrt{(n-1)pq}=o(n^{2/3})}$ is satisfied since for a fixed real $t$, $x_n(t)\sim \sqrt{2\log n}$ and $pq\leq 1/4$.

Also, by direct calculation it follows that
\begin{align}
\label{eq:yn}
&
{\displaystyle \frac{\lfloor y+1\rfloor ^2}{(n-1)^2}\sim p^2.}
\end{align}
Combining \eqref{eq:cov}, \eqref{eq:Bn}, and \eqref{eq:yn}, we obtain Assertion \ref{eq:A3}.
\end{proof}

\begin{A}
\label{eq:A4}
\begin{align}
\label{eq:TV}
&
d_{TV}\left({L}(W_n), Poi(\lambda_n)\right)=\frac{1}{2}\sum_{k\geq 0}\left|P(W_n=k)-P(Poi(\lambda_n)=k)\right| \nonumber\\
&
\leq
\left(1-e^{-\lambda_n}\right)
\left[
\pi_1^{(n)}+\frac{n(n-1)Cov\left(I_{1}^{(n)}, I_{2}^{(n)}\right)}{\lambda_n}
\right]\equiv U_{TVD},
\end{align}
where ${\displaystyle d_{TV}\left({L}(W_n), Poi(\lambda_n)\right)}$ is the total variation distance (TVD) between
distributions of $W_n$ and the Poisson distribution with mean $\lambda_n$.
\end{A}

\begin{proof}
Let $e_{ij}$ be the indicator random variable for the event $\left\{E_{ij}=1\right\}$.
The indicators ${\displaystyle \left(I_{1}^{(n)}, I_{2}^{(n)},\ldots,I_{n}^{(n)}\right)}$ are increasing functions of the independent edge indicators ${\displaystyle \left\{e_{ij}, 1\leq i<j \leq n\right\} }$.  Let ${\displaystyle I_{i}^{(n)*}}$ be an independent copy of ${\displaystyle I_{i}^{(n)}}$. Thus, for all nondecreasing functions $f$ and $g$ for which expectations
(from the expansion of the RHS of the expectation below) exist,
\begin{align*}
&
2Cov\left(f(I_{1}^{(n)}), g(I_{2}^{(n)})\right)=
E\left[\left(f(I_{1}^{(n)})-f(I_{1}^{(n)*})\right)\left(g(I_{2}^{(n)})-g(I_{2}^{(n)*})\right)\right]\geq 0,
\end{align*}
i.e., ${\displaystyle \left(I_{1}^{(n)}, I_{2}^{(n)},\ldots,I_{n}^{(n)}\right)}$ are associated random variables \citep{EPW1967}.
As such, by Theorem 2.G, and hence by Corollary 2.C.4. in \cite{BHJ1992}, we obtain a particular form of an upper bound for the TVD, i.e.,
$$
{\displaystyle d_{TV}\left({L}(W_n), Poi(\lambda_n)\right)
\leq \frac{1-e^{-\lambda_n}}{\lambda_n}\left(Var(W_n)-\lambda_n+2\sum_{i=1}^{n}\left(\pi_{1}^{(n)}\right)^2\right)
}
.$$ Hence \eqref{eq:TV} follows since $d_1^{*},\ldots,d_{n}^{*}$ are identically distributed, and therefore
${\displaystyle  \sum_{i=1}^{n}\left(\pi_{i}^{(n)}\right)^2=n \left(\pi_{1}^{(n)}\right)^2,}$
${\displaystyle \sum_{i\neq j}Cov\left(I_{i}^{(n)}, I_{j}^{(n)}\right)=n(n-1)Cov\left(I_{1}^{(n)}, I_{2}^{(n)}\right)
}$.

\end{proof}

\begin{remark}
Another upper bound of ${\displaystyle d_{TV}\left({L}(W_n), Poi(\lambda_n)\right)}$ is given in Theorem 5.E in \cite{BHJ1992}.
\end{remark}

\begin{A}
\label{eq:A5}
For a fixed real number $t$, if $npq\rightarrow \infty$ as $n\rightarrow \infty$, then
\begin{equation}
\label{eq:U}
{\displaystyle U_{TVD}\sim
\frac{1+2\log n}{n}e^{-t}\left(1-e^{-e^{-t}}\right).
}
\end{equation}
\end{A}
\begin{proof}
Combining Assertions \ref{eq:A2}, \ref{eq:A3} and \ref{eq:A4}, we obtain Assertion \ref{eq:A5}.
\end{proof}

From these assertions, we further obtain the following:

\begin{thm}
\label{eq:Main}
For ${\displaystyle p \in (0, 1)}$ and a fixed value of $k$, $$\lim_{n\rightarrow \infty}P(W_n=k)=e^{-e^{-t}}\frac{e^{-tk}}{k!},$$
with a rate of convergence of $W_n$ to the Poisson distribution with mean $e^{-t}$
of order ${\displaystyle \frac{\log n}{n}}$.
\end{thm}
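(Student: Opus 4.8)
The plan is to read the theorem off the five Assertions: the conceptual work is already done in Assertions~\ref{eq:A3}--\ref{eq:A4} (the precise covariance asymptotics, obtained from the uniform de Moivre--Laplace local limit theorem, together with the associated-random-variables form of the Stein--Chen bound of \cite{BHJ1992}), while Assertion~\ref{eq:A5} packages the resulting estimate and Assertion~\ref{eq:A2} identifies the limiting mean. What is left for the theorem itself is a short chain of inequalities, which I would carry out as follows.

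Fix $k\in\{0,1,2,\dots\}$. For fixed $p\in(0,1)$ one automatically has $npq\to\infty$, so Assertions~\ref{eq:A3} and~\ref{eq:A5} apply. By Assertion~\ref{eq:A4}, $d_{TV}\bigl(L(W_n),Poi(\lambda_n)\bigr)\le U_{TVD}$, and by Assertion~\ref{eq:A5}, $U_{TVD}\sim \tfrac{1+2\log n}{n}\,e^{-t}\bigl(1-e^{-e^{-t}}\bigr)$; in particular $d_{TV}\bigl(L(W_n),Poi(\lambda_n)\bigr)=O\!\bigl(\tfrac{\log n}{n}\bigr)\to 0$. Since the pointwise gap between two probability laws on $\mathbb{Z}_{\ge 0}$ is at most twice their total variation distance,
\[
\Bigl|\,P(W_n=k)-e^{-\lambda_n}\tfrac{\lambda_n^{k}}{k!}\,\Bigr|\;\le\;2\,d_{TV}\bigl(L(W_n),Poi(\lambda_n)\bigr)\;=\;O\!\Bigl(\tfrac{\log n}{n}\Bigr).
\]

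Then I would pass from $\lambda_n$ to $e^{-t}$: by Assertion~\ref{eq:A2}, $\lambda_n=n\pi_1^{(n)}\to e^{-t}$, and since $\lambda\mapsto e^{-\lambda}\lambda^{k}/k!$ is continuous on $(0,\infty)$, we get $e^{-\lambda_n}\lambda_n^{k}/k!\to e^{-e^{-t}}e^{-tk}/k!$. Combining this with the display above through the triangle inequality gives $\lim_{n\to\infty}P(W_n=k)=e^{-e^{-t}}e^{-tk}/k!$, which is the claim. (Bollob\'as' Theorem~\ref{eq:BB} then follows as a corollary: $d^{*}_{m:n}\le x_n(t)$ iff $W_n\le m-1$ and $x_n(t)=a_nt+b_n$, so summing the limit over $k=0,\dots,m-1$ reproduces~\eqref{eq:B}.)

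The point I would flag — and the only real obstacle once Assertions~\ref{eq:A1}--\ref{eq:A5} are available — is the \emph{rate}. The $O(\log n/n)$ of Assertion~\ref{eq:A5} is the rate of the genuine Poisson approximation, of $L(W_n)$ to $Poi(\lambda_n)$; converting to the nominal limit $Poi(e^{-t})$ introduces the extra error $\bigl|e^{-\lambda_n}\lambda_n^{k}/k!-e^{-e^{-t}}e^{-tk}/k!\bigr|=O(|\lambda_n-e^{-t}|)$, and $\lambda_n\to e^{-t}$ is only as fast as the normal-tail (Cram\'er / de Moivre--Laplace) asymptotics underlying Assertions~\ref{eq:A1}--\ref{eq:A2} allow, which for Gumbel-type normalizing constants decays merely like a power of $1/\log n$ (up to $\log\log n$ factors) and hence far slower than $\log n/n$. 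I would therefore phrase the $O(\log n/n)$ explicitly as the rate of convergence of $L(W_n)$ to $Poi(\lambda_n)$ with $\lambda_n\to e^{-t}$; if a single rate against $Poi(e^{-t})$ is insisted upon, one should refine $1-\Phi(x_n(t))\sim \phi(x_n(t))/x_n(t)$ to include its $O(x_n(t)^{-2})$ correction, use the explicit form of $x_n(t)$ to size $|\lambda_n-e^{-t}|$, and report the worse of the two rates. Deciding which rate to report, rather than any new idea, is all that remains.
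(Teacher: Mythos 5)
Your proposal is correct and follows essentially the same route as the paper: combine Assertions~\ref{eq:A4} and~\ref{eq:A5} to get vanishing total variation distance to $Poi(\lambda_n)$, use Assertion~\ref{eq:A2} to identify the limiting mean $e^{-t}$, and pass to pointwise convergence (the paper cites the equivalence of TVD and distributional convergence for integer-valued laws from \cite{JLR2000} where you make the triangle inequality explicit). Your caveat that the $O(\log n/n)$ rate properly measures the distance to $Poi(\lambda_n)$ rather than to $Poi(e^{-t})$ --- since $\lambda_n\to e^{-t}$ only at a logarithmic rate --- is a fair point that the paper's one-line conclusion ``the rate of convergence supplies the RHS of \eqref{eq:U}'' glosses over.
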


\begin{proof}
Fix ${\displaystyle p \in (0, 1)}$. From Assertions 4 and 5 it follows that ${\displaystyle \lim_{n\rightarrow \infty} d_{TV}\left({L}(W_n), Poi(\lambda_n)\right)=0}$,
i.e., ${\displaystyle \left\{W_n\right\}}$ converges in the TVD
to the Poisson distribution with mean $e^{-t}$.
Since  ${\displaystyle \lim_{n\rightarrow \infty}\lambda_n=e^{-t}}$ (Assertion 2), TVD convergence occurs if and only if ${\displaystyle \left\{W_n\right\}}$ in the distribution converges
to the Poisson distribution with mean $e^{-t}$ \citep{JLR2000}. The rate of convergence supplies the RHS of \eqref{eq:U}.
\end{proof}

We obtain an immediately corollary.

\begin{cor}
Suppose $p$ is fixed, $0<p<1$. Then, for a fixed natural number $m$ and a fixed real number $t$,
\begin{equation}
\label{eq:M}
{\displaystyle \lim_{n\rightarrow \infty} P\left(\frac{d_{m:n}-(n-1)p}{\sqrt{(n-1)pq}}\leq a_n t+b_n\right)
=e^{-e^{-t}}\sum_{k=0}^{m-1} \frac{e^{-tk}}{k!}}.
\end{equation}
\end{cor}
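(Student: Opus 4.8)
The plan is to read the corollary off Theorem \ref{eq:Main} by translating the event about the order statistic $d_{m:n}$ into an event about $W_n$. As a first step, observe that $x\mapsto (x-(n-1)p)/\sqrt{(n-1)pq}$ is a strictly increasing affine map, so it carries the decreasing sequence $d_{1:n}\ge d_{2:n}\ge\cdots\ge d_{n:n}$ to the decreasing sequence $d_{1:n}^{*}\ge d_{2:n}^{*}\ge\cdots\ge d_{n:n}^{*}$; in particular
\[
\frac{d_{m:n}-(n-1)p}{\sqrt{(n-1)pq}}=d_{m:n}^{*},
\]
and, by the definition of $x_n(t)$, the threshold $a_n t+b_n$ is exactly $x_n(t)$. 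Hence the event in \eqref{eq:M} is precisely $\{\,d_{m:n}^{*}\le x_n(t)\,\}$.

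Next I would invoke the elementary order-statistic identity: since $d_{m:n}^{*}$ is the $m$-th largest of $d_1^{*},\dots,d_n^{*}$, we have $d_{m:n}^{*}\le x_n(t)$ if and only if at most $m-1$ of the $d_i^{*}$ strictly exceed $x_n(t)$, i.e.\ $\{\,d_{m:n}^{*}\le x_n(t)\,\}=\{\,W_n\le m-1\,\}$ with $W_n=\sum_{i=1}^{n}I_i^{(n)}$. Consequently
\[
P\!\left(\frac{d_{m:n}-(n-1)p}{\sqrt{(n-1)pq}}\le a_n t+b_n\right)=P(W_n\le m-1)=\sum_{k=0}^{m-1}P(W_n=k).
\]
Because $m$ is a fixed natural number, the sum over $k$ is finite, so the limit $n\to\infty$ passes through it; by Theorem \ref{eq:Main}, $\lim_{n\to\infty}P(W_n=k)=e^{-e^{-t}}e^{-tk}/k!$ for each fixed $k$, and summing over $k=0,\dots,m-1$ gives the right-hand side of \eqref{eq:M}.

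I do not foresee any real obstacle: given Theorem \ref{eq:Main}, the corollary is just this event identity together with a finite interchange of limit and sum. The only points deserving a line of care are that $\{d_{m:n}^{*}\le x_n(t)\}=\{W_n\le m-1\}$ is an exact equality of events, so the non-strict inequality in \eqref{eq:M} matches the strict inequality $d_i^{*}>x_n(t)$ defining $I_i^{(n)}$ without any boundary correction, and that holding $m$ fixed makes the limit--sum interchange immediate. (If in addition one wanted the exact centering $np,\sqrt{npq}$ of Theorem \ref{eq:BB} rather than the $(n-1)$ versions used here, it would remain only to check that this change of normalization does not affect the limiting distribution, a routine verification.)
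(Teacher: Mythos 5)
Your proposal is correct and follows essentially the same route as the paper: the paper's proof likewise rests on the event identity $\{d_{m:n}^{*}\leq x_n(t)\}=\{W_n\leq m-1\}$ followed by an application of Theorem \ref{eq:Main}, with the finite sum over $k=0,\dots,m-1$ handling the limit. Your extra remarks on the strict-versus-non-strict inequality and the fixed finite sum are just a more explicit writing of the same argument.
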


\begin{proof}
Noticing that ${\displaystyle P\left(d^{*}_{m:n}\leq x_n(t)\right)=P\left(W_n\leq m-1\right)}$,
and applying Theorem \ref{eq:Main}, we obtain \eqref{eq:M}.
\end{proof}

\begin{com}

$U_{TVD}$ can also be bounded using
(i) \eqref{eq:cov} along with the \cite{U1937} inequality (on page 135 of his book), which states that
\begin{align*}
P\left(B_n=np+w\sqrt{npq}\right)=\frac{1}{\sqrt{2\pi n p q}}e^{-w^2/2}
\left[1+\frac{(q-p)(w^3-3w)}{6\sqrt{n p q}}\right]+\triangle
\end{align*}
where
$$
|\triangle|<\frac{0.15+0.25|p-q|}{(npq)^{\frac{3}{2}}}+e^{-\frac{3}{2}\sqrt{npq}},
$$
provided $npq\geq 25$;
and using (ii) the Berry-Essen Theorem (\cite{F1971}, Chapter XVI.5) adapted to the binomial distribution,
which states that for all $w$ and $n\geq 2$,
\begin{equation}
\label{eq:BE}
\Big |P\left((B_n-np)/\sqrt{npq}>w\right)-\left(1-\Phi\left(w\right)\right)\Big|
\leq \frac{C}{\sqrt{n}}\frac{p^2+q^2}{\sqrt{pq}},
\end{equation}
where ${\displaystyle C=\frac{\sqrt{10}+3}{6\sqrt{2\pi}}=0.4968\ldots}$ \citep{S2016}.
\end{com}

\begin{com}
The method and the results can be extended to the case where $p$ depends on $n$.
In that case, Assertions 3 and 5 hold if  $p(1-p)n\rightarrow \infty$ as $n\rightarrow \infty$.
This can be compared with the condition in Theorem 3.3' of \cite{B2001}.
\end{com}

\begin{com}
If $\xi_1,\ldots,\xi_n$ are independent and identically distributed standard normal random variables,
with corresponding $\xi_{1:n}\geq \xi_{2:n}\geq \cdots \geq \xi_{n:n}$ sequence arranged in decreasing order,
then the limit distribution $P\left(\xi_{m:n} \leq a_n t+b_n\right)$
is identical to the RHS of \eqref{eq:B}, with the same $a_n, b_n$ (see for example \cite{G1987}).
However, \eqref{eq:B} is not obvious since $d_1,\ldots,d_n$
are dependent and their joint distribution depends on $n$.
\end{com}

\begin{com}
The same asymptotic distribution as in \eqref{eq:B}, for the ordered normalized scores, holds for a round-robin tournament model with $n$ players \citep{M2021}.
The difference is that the round-robin tournament is a complete directed graph and  the total scores (degrees) of the players
are negatively correlated.
\end{com}

\section*{Acknowledgement}
Research supported in part by BSF grant 2020063.

{}

\end{document}